\newtheorem{theorem}{Theorem}[section]
\theoremstyle{definition}
\newtheorem{property}{\bf Property}[section]
\title[GVNS for single-machine scheduling problems with step-deterioration]
      {A general variable neighborhood search for single-machine total
tardiness scheduling problems with step-deteriorating jobs}
\author[Peng Guo and Wenming Cheng and Yi Wang]{}
\subjclass{Primary: 90B35, 90C59; Secondary: 90C11.}
 \keywords{single-machine scheduling, heuristic, general variable neighborhood search, step-deterioration,
total tardiness.}
 \email{pengguo318@gmail.com}
 \email{wmcheng@home.swjtu.edu.cn}
 \email{ywang2@aum.edu}
\thanks{}
\begin{document}
\maketitle

% Enter the first author's name and address:
\centerline{\scshape Peng Guo and  Wenming Cheng}
\medskip
{\footnotesize
% please put the address of the first author
 \centerline{School of Mechanical Engineering}
   \centerline{Southwest Jiaotong University, Chengdu, China}
} % Do not forget to end the {\footnotesize by the sign }

\medskip

\centerline{\scshape Yi Wang}
\medskip
{\footnotesize
 % please put the address of the second  and third author
 \centerline{ Department of Mathematics}
   \centerline{Auburn University at Montgomery, AL, USA}
}

\bigskip

% The name of the associate editor will be entered by an editorial staff
% "Communicated by the associate editor name" is not needed for special issue.
 \centerline{(Communicated by the associate editor name)}

%The abstract of your paper
\begin{abstract}
In this article, we study a single-machine scheduling problem with the objective of minimizing the total tardiness for a set of independent jobs. The processing time of a job is modeled as a step function of its starting time and a specific deteriorating date. The total tardiness as one important objective in practice has not been concerned in the studies of single-machine scheduling problems with step-deteriorating jobs.     To overcome the NP-hardness of the problem, we propose a heuristic named simple weighted search procedure (SWSP)  and a general variable neighborhood search algorithm (GVNS) to obtain near optimal solutions.  Extensive numerical experiments are carried out on   randomly generated test instances in order to evaluate the performance of the proposed algorithms. By comparing to   the CPLEX optimization solver, the heuristic SWSP and the standard variable neighborhood search, it is shown that the  proposed GVNS algorithm   can provide better solutions within a reasonable running time.
\end{abstract}

%The title of your section 1
\section{Introduction}

As an important tool for manufacturing and engineering, scheduling
has a major impact on the productivity of a process. In the classical
model of scheduling theory, it is assumed that the processing times
of all jobs are known in advance and remain constant during the entire
production process. However, this assumption may not  be  applicable
to model some real manufacturing and service problems. Examples can be
found in steel production, equipment maintenance, cleaning tasks allocation
and so forth \citep{Kunnathur1990SDPLD,Mosheiov1991}, where any delay
or waiting in starting to process a job may increase the necessary time
for its completion. Such kinds of jobs are called {\em deteriorating jobs}.
This kind of {\em scheduling problems with deteriorating jobs} was firstly considered by
\citet{Browne1990} and \citet{Gupta1988}. They assumed that the
processing time of a job is a  single linearly non-decreasing function of its
starting time and aimed at minimizing the expected makespan and the
variance of the makespan in a single-machine environment. The function that describes the processing time of a deteriorating job shall henceforth be  called a {\em deterioration function}. Since then,
there is a rapidly growing interest in the literature to study various
types of scheduling problems involving deteriorating jobs. A recent
survey of scheduling problems with deteriorating jobs was provided
by \citet{Cheng2004survey}.

Most of the research concentrated on obtaining the optimal schedule
for scheduling   jobs for which their processing times continuously depend on their starting times  \citep{Bachman2000LD,Oron2007SL,Wu2007LD,Ji2009SL,Wang2010SL,Jafari2012LD}.
However, if some jobs fail to be processed prior to a pre-specified
threshold, their processing times will be extended by adding an extra
penalty time in some situations. Job processing times of this type of  deterioration
may be characterized  by a piecewise defined     function.
\citet{Kunnathur1990SDPLD} firstly studied single-machine scheduling
with a piecewise deterioration function. They assumed the actual processing
time of a job is split into a fixed part and a   variable penalty  part.
The variable part depends linearly upon the starting time of the job.
\citet{Sundararaghavan1994SDPLD} introduced a total weighted completion
time scheduling problem with step deterioration. Then, \citet{Mosheiov1995SDPLD}
investigated  the scheduling problem with step deterioration in which the objective is to minimize the
makespan on a single machine or  multiple machines. He also  introduced some simple
heuristics for all these NP-hard problems.

For the scheduling problem with a piecewise linear deterioration function,
\citet{Kubiak1998SDPLD} considered single-machine scheduling with
a generalization of the unbounded deterioration model proposed by
\citet{Browne1990} and presented NP-hardness proofs. They also developed a
pseudo-polynomial dynamic programming algorithm and a branch and bound
algorithm to obtain the optimal schedule. Alternatively, \citet{Cheng2003SDPLD}
studied the problem of scheduling jobs with piecewise linear decreasing
processing times on a single or multiple machines. They aimed to minimize the
makespan and the {\em flow time}, i.e., the total completion time,  and proved that the two problems
are NP-hard. Subsequently, \citet{Ji2007SDPLD} gave a fully polynomial
time approximation scheme for the same case with a single-machine.
Afterwards, \citet{Wu2009SDPLD} provided two heuristic algorithms
to solve the single-machine problem under the piecewise linear deterioration
model. \citet{Moslehi2010SDPLD} dealt with the single-machine scheduling
problem with the assumption of piecewise-linear deterioration. They suggested a branch
and bound algorithm and a heuristic algorithm with complexity $O(n^{2})$ for
the objective of minimizing the total number of tardy jobs, where $n$ is the total number of jobs in the problem.

Some articles focused on the scheduling problem with step-deterioration
have been published. \citet{Cheng2001SDPLD} studied some single-machine
scheduling problems and showed that the {\em flow time problem} is NP-complete.
\citet{Jeng2004SDPLD} introduced a branch and bound algorithm for
the single-machine problem of minimizing the makespan. In addition,
They also studied the same problem for minimizing the flow time \citep{Jeng2005SDPLD}. \citet{He2009SDPLD} proposed an exact
algorithm to solve most of the problems with up to 24 jobs and a heuristic
algorithm to derive a near-optimal solution. \citet{Layegh2009SDPLD}
minimized the total weighted completion time by using a memetic algorithm.
\citet{Cheng2012SDPLD} developed a variable neighborhood search to
minimize the flow time on parallel machines. Furthermore,
batch scheduling with step-deterioration has got attentions \citep{Leung2008SDPLD,Mor2012SDPLD}.

However, {\em the total tardiness as one important objective in practice
has not been concerned in the studies of single-machine scheduling
problem with step-deterioration}. Minimizing {\em total tardiness} on a single machine has been drawing considerable attentions
of  researchers in the past decades and  it is  NP-hard in the ordinary sense when no deterioration is considered \citep{DU1990NP}. The latest theoretical developments for the
single-machine scheduling problem and the current state-of-the-art
algorithms are reviewed by \citet{Koulamas2010RW}.
He found that the single-machine total tardiness scheduling problem
continues to attract researchers' interests from both theoretical and
practical perspectives. {\em In this article, we consider the single-machine
total tardiness scheduling problem with step-deteriorating jobs}. Since
the total tardiness problem without step-deterioration in a single-machine is a NP-hard problem, the problem tackled here is also a NP-hard
problem. To the best of our knowledge, there is no literature on minimizing
the total tardiness in a single-machine scheduling problem with step-deteriorating
jobs.

The remainder of the study is organized as follows. Section \ref{sec:Sec2}
contains the problem description and a mixed integer programming
model. In Section \ref{sec:Sec3}, a heuristic and a general
variable neighborhood search algorithm are developed for the proposed
problem. The    proposed methods are tested  and compared to the CPLEX and the variable neighborhood search (VNS) on variance test instances of both small size and large size
in Section \ref{sec:Sec4}. In the last section, conclusions and future
works are mentioned.

\section{Problem formulation\label{sec:Sec2}}

The single-machine total tardiness scheduling problem with step deteriorating
jobs
can be described as follows.   Let  $N:=\left\{ 1,2,\text{\ldots},n\right\} $ be the set of $n$ jobs
  to be scheduled on a single-machine without
preemption. Assume that all jobs are ready at time zero and the machine
is available at all times. In addition, the machine can handle only
one job at a time, and cannot keep idle until the last job assigned
to it is processed and finished. For each job  $j\in N$, there is a {\em basic
processing time} $a_{j}$, a {\em due date} $d_{j}$ and a given {\em deteriorating threshold}, also called {\em deteriorating
date} $h_{j}$. If the  {\em starting time} $s_{j}$ of job $j\in N$ is less than or
equal to the given threshold $h_{j}$, then job $j$ only requires a basic
processing time $a_{j}$. Otherwise,   an extra
penalty $b_{j}$ is incurred. Thus, the {\em actual processing time} $p_{j}$ of job
$j$ can be defined as a step-function: $p_{j}=a_{j}$ if $s_{j}\leqslant h_{j}$;
$p_{j}=a_{j}+b_{j}$, otherwise. Without loss of generality, the four parameters $a_{j}$, $b_{j}$,
$d_{j}$ and $h_{j}$ are assumed to be integers.

Let $\pi=(\pi_{1},\ldots,\pi_{n})$
be a sequence that arranges the current processing order of jobs in $N$, where $\pi_{k}$, $k=1,\ldots,n$,
indicates  the job in position $k$.
The {\em tardiness} $T_{j} $
of a job $j$ in a schedule $\pi$ can be calculated by $T_{j} =\max\left\{ 0,s_{j} +p_{j} \text{\textminus}d_{j}\right\} $.
The objective is to find a schedule $\pi^{*}$ such that the {\em total tardiness}
$\sum T_{j}$ is minimized. The total tardiness is a non-decreasing criterion
of the job completion times. Using the three-field notation, this
problem can be denoted by $1|p_{j}=a_{j}$ or $a_{j}+b_{j},h_{j}|\sum T_{j}$.

Based on the  above description, we   formulate the problem as a  0-1
integer programming model. Firstly, the decision variable $y_{ij}$, $i,j \in N$ is defined
 such that  $y_{ij}$ is 1 if   job $i$ is scheduled before job
$j$ on the single-machine, and 0 otherwise. For each pair of jobs $i$ and $j$, $y_{ij}+y_{ji}=1$. Then, for a schedule $\pi$ of the jobs in $N$, we minimize
\begin{gather}
Z(\pi):=\sum_{j=1}^{n}T_{j}\label{eq:2.1}
\end{gather}
subject to
\begin{eqnarray}
p_{j}&=&\begin{cases}
a_{j}, & s_{j}\leqslant h_{j}\\
a_{j}+b_{j}, & \mathrm{otherwise},
\end{cases}\qquad \begin{gathered}\forall j\in N\end{gathered}
\label{eq:2.2}\\
s_{i}+p_{i}&\leqslant & s_{j}+M(1-y_{ij}),\qquad\forall i,j\in N,i\ne j\label{eq:2.3}\\
%s_{j}+p_{j} &\leqslant & s_{i}+My_{ij},\qquad\forall i,j\in N,i<j\label{eq:2.4}\\
s_{j}+p_{j}-d_{j} &\leqslant & T_{j},\qquad\forall j\in N\label{eq:2.5}\\
y_{ij} &\in & \left\{ 0,1\right\}, \qquad\forall i,j\in N,i\neq j\label{eq:2.6}\\
s_{j},T_{j}& \geqslant & 0,\qquad\forall j\in N, \label{eq:2.7}
\end{eqnarray}
where $M$ is a large positive constant such that $M\rightarrow\infty$ as $n\to \infty$.
For example,  $M$ may be  chosen as  $M:=\max_{j\in N}\left\{ d_{j}\right\} +\sum_{j\in N}(a_{j}+b_{j})$
.

In the above mathematical model, equation  (\ref{eq:2.1}) represents
the objective of minimizing  the total tardiness. Constraint (\ref{eq:2.2})
defines the processing time of each  job.
%If the
%starting time $s_{j}$ of job $j$ is later than its given threshold
%$h_{j}$, its processing time must be penalized and extends to $a_{j}+b_{j}$.
%On the other hand, when the job starts processing before its threshold,
%its processing time $p_{j}$ equals to its basic processing time.
Constraint (\ref{eq:2.3})   determines the starting
time $s_{j}$ of job $j$ with respect to the decision variables $y_{ij}$.
   Constraint
(\ref{eq:2.5}) defines the tardiness of job $j$.
Finally, constraints (\ref{eq:2.6}) and (\ref{eq:2.7}) define the
boundary values of variables $y_{ij}$, $s_j$, $T_j$, for $i,j\in N$.

Next theorem concerns the complexity of the problem. 
\begin{theorem}\label{theorem_NP_complete}
The problem
 1 $|p_{j}=a_{j}$ or $ a_{j}+b_{j},h_{j}| \sum T_{j}$
 is \emph{NP-complete}.
\end{theorem}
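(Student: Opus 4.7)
The plan is to establish the result by the standard two-part argument for NP-completeness: membership in NP, followed by a polynomial-time reduction from a known NP-complete problem. The decision version of the problem under consideration asks, given an instance and a threshold $K$, whether there exists a schedule $\pi$ with $Z(\pi)\le K$.

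For membership in NP, I would take a candidate sequence $\pi$ as the certificate and describe the obvious verifier: walk through $\pi$ in order, at each step compute $s_{\pi_k}$ from the previous completion time, determine $p_{\pi_k}$ via the step rule in (\ref{eq:2.2}), then compute $T_{\pi_k}$ via (\ref{eq:2.5}), and finally compare $\sum_{j\in N} T_j$ with $K$. Each arithmetic operation involves numbers polynomially bounded in the input size, so verification runs in polynomial time.

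For NP-hardness, I would reduce from the classical single-machine total tardiness problem $1\,||\,\sum T_j$, which is NP-complete in the ordinary sense by \citet{DU1990NP}. Given any instance $(a_j, d_j)_{j\in N}$ of $1\,||\,\sum T_j$ together with a threshold $K$, I construct an instance of $1\,|\,p_j=a_j\text{ or }a_j+b_j, h_j\,|\,\sum T_j$ with the same $a_j$ and $d_j$, by choosing the step parameters so that the deterioration never triggers. Concretely, set $b_j:=0$ (or any nonnegative integer) and $h_j := \sum_{i\in N} a_i$ for every $j\in N$, and keep the same threshold $K$. Since the sum of all basic processing times is an upper bound on every starting time in any feasible non-idle schedule, the condition $s_j\le h_j$ is automatically satisfied for every job, and thus $p_j=a_j$ throughout. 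Hence feasible schedules and their tardiness values are in exact one-to-one correspondence with those of the original instance, so the reduction preserves ``yes'' and ``no'' answers. The mapping is computable in time polynomial in the binary encoding of the input, and the output size is linear in the input size.

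The main step to watch is the parameter choice in the reduction: one must verify that $h_j$ is large enough that the step never activates in \emph{any} schedule, and that the chosen $h_j$ itself admits a polynomial-size binary encoding. Setting $h_j=\sum_{i\in N}a_i$ takes care of both, since this quantity is bounded by the input and has encoding length linear in that of the $a_i$'s. Combining the verifier with this reduction then yields NP-completeness of the decision version, which is the statement of the theorem.
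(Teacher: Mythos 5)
Your proof is correct, but it takes a genuinely different route from the paper's. The paper reduces from the single-machine total \emph{completion time} problem with step-deteriorating jobs, $1|p_i=a_i$ or $a_i+b_i,h_i|\sum C_i$, shown NP-complete by Cheng and Ding: setting every due date $d_i=0$ turns each tardiness $T_i=\max\{0,C_i-d_i\}$ into the completion time $C_i$, so the known hard problem appears as the special case of trivial due dates. You instead reduce from the classical $1||\sum T_j$ problem of \citet{DU1990NP}, making the \emph{deterioration} trivial by choosing $h_j=\sum_i a_i$ (so the step never fires in any non-idle schedule), which exhibits the known hard problem as the special case of vacuous deterioration. Both are valid restriction arguments; the paper's version attributes the hardness to the step-deterioration structure alone, while yours attributes it to the tardiness objective alone, and yours is in fact the argument the paper sketches informally in its introduction. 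Your writeup is also somewhat more careful than the paper's in two respects: you explicitly pass to the decision version and verify membership in NP (the paper only gives the hardness direction, so strictly speaking it proves NP-hardness rather than NP-completeness), and you check that the constructed parameters have polynomial encoding length. No gaps.
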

\begin{proof}
Cheng and Ding [4]  proved that the problem $1|p_i=a_i$ or $a_i+b_i, h_i|\sum C_i$,    that is, the problem of  minimizing the total completion time on a single machine with step-deteriorating jobs, is NP-complete. In our problem, if the due date $d_i$, $i\in N$, is set to 0, then the total tardiness scheduling problem is reduced to the total completion time scheduling problem $1|p_i=a_i$ or $a_i+b_i, h_i|\sum C_i$.  Thus, our problem, as a more general case, must be  NP-complete as-well.
\end{proof}

\section{Solution methodologies\label{sec:Sec3}}

Due to the NP-hardness of  the considered problem, only small size instances
can be solved optimally by the enumeration techniques such as branch
and bound or dynamic programming. However the size of some practical
problem encountered in industry is usually large. We need to develop
 feasible heuristics to deal with large-sized instances. In this section,
a simple weighted search procedure and a general variable neighborhood
search algorithm are proposed.

Before introducing the  algorithms, we discuss two properties
of the considered problem that will be used later.
\begin{property}
\label{pro:Property 1}Consider any two jobs $j$ and $k$ that are
processed before their given deteriorating dates. If  $p_{j}\leq p_{k}$ and $d_{j}\leq d_{k}$, then there is an optimal sequence in which job $j$
is scheduled before job $k$.  \end{property}
\begin{proof}
Since the two jobs under consideration are not deteriorated, their
processing times are only the basic processing times.   Lemma 3.4.1 \citep[p.~51]{pinedo2008scheduling} regarding the total
tardiness problem in a single-machine environment says
if $p_{j}\leqslant p_{k}$ and $d_{j}\leqslant d_{k}$ then there
exists an optimal sequence in which job $j$ is scheduled before job
$k$. Thus, applying this lemma based on the basic processing times and the
due dates  yields an optimal sub-sequence including the job pair ($j$,
$k$) in an optimal schedule. \end{proof}
\begin{property}
\label{pro:Property 2}Consider any two jobs $j$ and $k$ that are
processed after their given deteriorating dates. If
$p_{j}+b_{j}\leq p_{k}+b_{k}$ and $d_{j}\leq d_{k}$, then job $j$
 is scheduled before job $k$   in an optimal sequence. \end{property}
\begin{proof}
Since the two jobs $j$ and $k$ are processed after their deteriorating times,
the actual processing time of each job  is the sum of its basic processing
time and a penalty time. Thus,  like Property \ref{pro:Property 1}, Property \ref{pro:Property 2} is a direct result of  applying Lemma 3.4.1 of \citep{pinedo2008scheduling} to all such job pairs ($j$,
$k$) based on their actual processing times $(a_{j}+b_{j}$, $a_{k}+b_{k}$)
and the due dates ($d_{j}$, $d_{k}$).
\end{proof}
According to the two properties,   a job should be scheduled
in an earlier position if it has a smaller value of $a_{i}$ (or $a_{i}+b_{i}$ if it is deteriorated) and
a smaller value of $d_{i}$. There is a weighted
search algorithm that has  applied to solve the single-machine flow
time scheduling problem \citep{He2009SDPLD}. Thus, we adopt this
idea and provide a simple weighted search procedure (SWSP) to obtain a heuristic for
the $1|p_{j}=a_{j}$ or $a_{j}+b_{j},h_{j}|\sum T_{j}$ problem.

\subsection{Simple weighted search procedure}

We shall need an appropriately chosen triple   $\omega:=(\omega_{1},
\omega_{2}, \omega_{3})$  of positive weights to compute a  weighted value  $m_i:=\omega_{1}d_{i}+\omega_{2}p_{i}+\omega_{3}h_{i}$, for each $i\in N$.
Since it is difficult to determine  a priori  a triple of weights that can yield good
solutions, we adopt the {\em dynamic update strategy}
to search all possible triples  $\omega$.
Weight $\omega_{1}$ is a linearly increasing weight,
given by  $\omega_{1}=\omega_{1\min}+(\omega_{1\max}-\omega_{1\min})(l_{1}-1)/(n-1)$,
where $l_{1}$ varies from 1 to $n$. Weight $\omega_{2}$ adopts
the same updating formula  as     weight $\omega_{1}$. The only
difference between the two weights is their possible minimal and maximal
  values. Suitable values of $\omega_{1\min}$, $\omega_{1\max}$, $\omega_{2\min}$ and $\omega_{2\max}$ may be determined by preliminary experiments. For example,   for the numerical experiments we conducted in this article, we have chosen the  values of the four parameters
$\omega_{1\min}$, $\omega_{1\max}$, $\omega_{2\min}$ and $\omega_{2\max}$ to be
0.2, 0.9, 0.1 and 0.7, respectively. With $\omega_1$ and $\omega_2$ determined,  $\omega_{3}:=1-\omega_{1}-\omega_{2}$.
But $\omega_{3}$ may become negative    by using this formula. Should a negative
$\omega_{3}$ occurs, it is adjusted to  a pre-specified positive value.
In this article, this preset value is chosen to be 0.1. Let $\Omega$  be the set of  all possible
triples of weights   obtained.

We now describe the SWSP. Firstly, let a  triple $(\omega_1, \omega_2,\omega_3)\in \Omega$ be fixed. Then the  job with the minimal due date is
scheduled in the first position. Subsequently the unscheduled jobs are arranged in a nondecreasing
order of the weighted sums $m_i$, $i\in N$. Thus each triple $(\omega_1, \omega_2,\omega_3)\in \Omega$ is associated with a schedule.  Correspondingly, for each schedule, the value of the objective total tardiness is calculated.   Among all these different solutions, the best solution is given by the one with the smallest total tardiness. Furthermore,
in order to refine the quality of the solution,
a local improvement approach based on pairwise swapping of jobs is used in the procedure. The detailed
  SWSP is described in Algorithm \ref{alg:SWSP}.

\begin{algorithm}[h]
\begin{algorithmic}[1]

\caption{\label{alg:SWSP}Simple weighted search procedure (SWSP)}

\STATE Input the initial data of a given instance;

\STATE Set $c_{[0]}=0$, $N_{0}=[1,\cdots,n]$ and $\pi=[\;]$. Set $\pi_{\mathrm{best}}=\pi$ and $Z_{\mathrm{best}}=\sum_{j\in N_0}(a_j+b_j)$;

\STATE Generate the entire set $\Omega$ of possible triples of weights. For each triple $(\omega_{1}, \omega_{2}, \omega_{3})\in \Omega$,
perform the steps 4--27.

\STATE Choose  job $i$ with minimal due date to be scheduled in
the first position;

\STATE $\:$$c_{[1]}=c_{[0]}+a_{i}$, $\pi=\left[ \pi,\; i\right]$;

\STATE $\:$delete  job $i$ from $N_{0}$;

\STATE $\:$set $k=2$;

\REPEAT

\IF {$c_{[k-1]}>\max\{h_{r},r\in N_{0}\}$}

\STATE choose  job $j$ from $N_{0}$ with the smallest weighted value $m_j$ to be scheduled in the $k$th position;
\STATE $c_{[k]}=c_{[k-1]}+a_{j}+b_{j}$, $\pi=\left[ \pi, \;j\right]$;
\STATE delete  job $j$ from $N_{0}$;
\STATE $k=k+1$;

\ELSE

\STATE choose  job $j$ from $N_{0}$ with the smallest value $m_j$ to
be scheduled in the $k$th position;
\IF { $c_{[k-1]}>h_{j}$}

\STATE $c_{[k]}=c_{[k-1]}+a_{j}+b_{j}$;

\ELSE

\STATE $c_{[k]}=c_{[k-1]}+a_{j}$;

\ENDIF

\STATE $\pi=\left[ \pi,\;j\right]$;

\STATE delete  job $j$ from $N_{0}$;

\ENDIF

\UNTIL {the set $N_{0}$ is empty}

\STATE calculate the   total tardiness $Z(\pi)$ of the obtained schedule $\pi$;
\IF {$Z(\pi)<Z_{\mathrm{best}}$}
\STATE $\pi_{\mathrm{best}}=\pi$ and $Z_{\mathrm{best}}=Z(\pi)$;
\ENDIF

\FOR {$i\leftarrow1$ to $n$}

\FOR {$j\leftarrow1$ to $n$}

\IF {$i\neq j$}

\STATE create a new solution $\pi{'}$ by interchanging jobs
in the $i$th and $j$th position from $\pi_{\mathrm{best}}$;

\STATE replace $\pi_{\mathrm{best}}$ by $\pi{'}$ if the total tardiness of $\pi{'}$
is smaller than that of $\pi_{\mathrm{best}}$;

\ENDIF

\ENDFOR

\ENDFOR

\STATE Output the finial solution $\pi_{\mathrm{best}}$.
\end{algorithmic}
\end{algorithm}

To illustrate the proposed heuristic SWSP, a simple example is given with eight jobs. The related data for this instance is provided in Table 1. Suppose that the initial sequence is $N_0=[1,2,3,4,5,6,7,8]$.

   Firstly, the weighted search process is carried out. Using the chosen parameters of $\omega_{1\min}$, $\omega_{1\max}$, $\omega_{2\min}$, $\omega_{2\max}$ and the methods for calculating $\omega_1, \omega_2, \omega_3$ as described above,  an initial triple of weights is determined as (0.2, 0.1, 0.7). Then  job 2 with the minimal due date is assigned to the first position. We then have that  $c_{[1]}=c_{[0]}+a_2=44$, and $\pi=[2]$.
Delete job 2   from $N_0$. Subsequently, the weighted values $m_{j}, j \in N_0\setminus\{2\}$ of unscheduled jobs are calculated and arranged in a nondecreasing order. Then a schedule is obtained with job 2 in the first position followed  by other jobs appearing in the nondecreasing order of the weighted sum values.  In this sequence, for a job at position greater than or equal to 2,  if its deteriorating date is earlier than the completion time of its immediately previous job, then its processing time is the sum of basic processing time and penalty. Otherwise, its processing time is just the  basic processing time.   The resulted  processing sequence by using the triple (0.2, 0.1, 0.7) of weights is   $\pi=[2, 8, 3, 4, 6, 5, 1, 7]$. Its   corresponding total tardiness is calculated as 1291. Next, other triples of weights are generated by the described formulas. For each triple of weights, the previous procedure is iterated to find a new schedule. If the new schedule is better than the incumbent one, the incumbent one is updated. When all possible combinations of the triple weights have been searched, the weighted search process is finished. The sequence delivered by the weighted search is found to be $\pi=[2, 3, 1, 5, 8, 4, 7, 6]$, and the corresponding objective value is 696.

Secondly, the pairwise swap operation based on the sequence delivered by the weighted search is carried out. After the swap operation, the schedule is improved in terms of the total tardiness. The final schedule is given by  $\pi=[3, 2, 4, 1, 5, 7, 8, 6]$ and its objective value is 575 (see figure \ref{fig_NEGC}). We note that the  result given by the SWSP is very close to the optimal solution 572 that is found by CPLEX.

\begin{figure}[h]
  \centering
  % Requires \usepackage{graphicx}
  \includegraphics[scale=0.95]{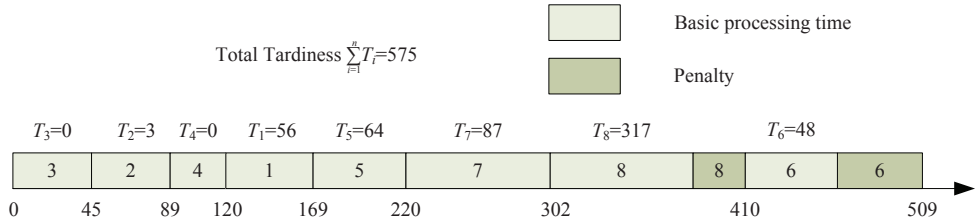}\\
  \caption{Gantt Chart of the schedule given by SWSP}\label{fig_NEGC}
\end{figure}

% Table generated by Excel2LaTeX from sheet 'Sheet1'
\begin{table}[htbp]
  \centering
  \caption{The input date of the instance with eight jobs}
    \begin{tabular}{lrrrrrrrr}
    \toprule
    Job($j$) & 1     & 2     & 3     & 4     & 5     & 6     & 7     & 8 \\
    \midrule
    Basic processing time $a_j$ & 49    & 44    & 45    & 31    & 51    & 52    & 82    & 80 \\
    Due date $d_j$ & 113   & 86    & 114   & 218   & 156   & 461   & 215   & 93 \\
    Deteriorating date $h_j$ & 271   & 255   & 91    & 131   & 205   & 101   & 367   & 85 \\
    Penalty $b_j$ & 33    & 19    & 41    & 27    & 18    & 47    & 44    & 28 \\
    \bottomrule
    \end{tabular}%
  \label{tab:addlabel}%
\end{table}%

\subsection{General variable neighborhood search heuristic}

The variable neighborhood search (VNS) is a simple and effective meta-heuristic
proposed by \citet{Mladenovic1997VNS}. It has the advantage of avoiding
entrapment at a local optimum by systematically swapping   neighborhood structures during the search. The basic VNS combines two
search approaches: a stochastic approach in the {\em shaking step}
that finds a random neighbor of the incumbent, and a deterministic
approach which applies  any type of local
search algorithm. Serval VNS variants
have been proposed and successively applied to numerous combinatorial
optimization problems \citep{Hansen2010VNS}. Among which, the variable neighborhood
descent (VND) \citep{Hansen2001VND} as the best improved deterministic
 method is the most frequently used variant. The VND initiates
from   a  feasible solution as the incumbent one and then carries out
a series of neighborhood searches through operators $\mathcal{N}_{k}$,($k=1,\cdots,k_{\max}$).
If a better solution is found in a neighborhood, the incumbent solution is replaced by the better one
and the search  continues within the current neighborhood;
otherwise it will explore the next neighborhood.
The obtained  solution at the end of the search is a local optimum with respect to
all neighborhood structures.  If the deterministic local search approach of the basic VNS is replaced
by the  VND search, the resulted algorithm is called a {\em general
VNS} (GVNS).
The GVNS has  received attentions and showed good performance compared
to other VNS variants. So far, the GVNS has been applied to many optimization
problems, such as the incapacitated single allocation $p$-hub median
problem \citep{Aleksandar2010GVNS}, the one-commodity pickup-and-delivery
traveling salesman problem \citep{Mladenovic2012GVNS}, the capacitated
vehicle routing problem \citep{Lei2012GVNS} and the single-machine
total weighted tardiness problem with sequence dependent setup times
\citep{Kirlik2012GVNS}.

As far as  we know, there is no published work on
solving   scheduling problems with deteriorating jobs by the GVNS.
Therefore, in this section {\em we develop a GVNS heuristic for the single-machine total tardiness scheduling
problem with step-deterioration.}
In what follows,  the main components of the designed GVNS algorithm are described:
the initialization process,   five neighborhood structures provided
for the shaking step and local search procedures. In addition, a
perturbation phase with a 3-opt operation without change of direction
is embedded within the search
process in order to  decrease the probability of returning to the previous local optimum.

\subsubsection{Initialization}

Typically, a good
initial sequence can be obtained   by using the Earliest Due Date
(EDD) rule. That is to say, all jobs are arranged in nondecreasing
order of the due dates.

\subsubsection{Neighborhood structures}

In a local search algorithm, a neighborhood structure is designed by
introducing moves from one solution to another.  In order to conduct a local search in the proposed GVNS,
we next develop five neighborhood structures based on swap, insertion
and $k$-opt for the problem $1|p_{j}=a_{j}$ or $a_{j}+b_{j},h_{j}|\sum T_{j}$.
These neighborhood structures are defined by their corresponding operators. We remark that for simplicity  a neighborhood structure shall be referred to by the name of its  corresponding generating operator. For example, if a neighborhood structure is generated by an operator $\mathcal{N}_1$, the neighborhood structure shall be called $\mathcal{N}_1$ neighborhood structure or simply $\mathcal{N}_1$ neighborhood.

Swap Operator ($\mathcal{N}_{1}$): The swap operator (Figure \ref{fig:fig_swap}) selects a pair of jobs $\pi_{i}$ and $\pi_{j}$ in the current sequence $\pi$ of jobs,
exchanges their positions, and computes the total tardiness of the resulting solution.
If an improvement is found, the new sequence is accepted as the incumbent one.
The operator  repeats this process for all indices $i$,$j\in N$, with $i \neq j$ until all  the neighborhoods have been searched.
The size of the swap operator is $n(n-1)/2$. %%That is, the time complexity of the operator is $O(n^{2})$.
%However, for any given sequence with $n$ jobs,
%it takes $O(n)$ time to calculate the total tardiness. Thus the whole neighborhood operator needs $O(n^{3})$ time to complete the search.

\begin{figure}
  \centering
  % Requires \usepackage{graphicx}
  \includegraphics[scale=0.6]{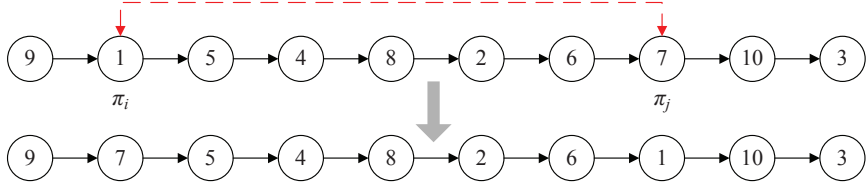}\\
  \caption{Illustration of the swap operator}\label{fig:fig_swap}
\end{figure}

Insertion Operator ($\mathcal{N}_{2}$): For a given incumbent arrangement of jobs, the insertion neighborhood (Figure \ref{fig:fig_insert}) can be obtained
by removing a job from its position and inserting it into another position.
%%That is to say, for a given pair of indices $i$ and $j$ ($1\leqslant i,j\leqslant n$),
%%let $\pi =[\alpha,\pi(i),\beta,\pi(j),\gamma]$ if $i<j$ and $\pi=[\alpha,\pi(j),\beta,\pi(i),\gamma]$ if $i>j$.
 All jobs are considered for insertion operation.
Once an improvement is observed, the new arrangement of jobs is deemed as the incumbent one.
%%This operator also requires $O(n^{2})$ time.

\begin{figure}
  \centering
  % Requires \usepackage{graphicx}
  \includegraphics[scale=0.6]{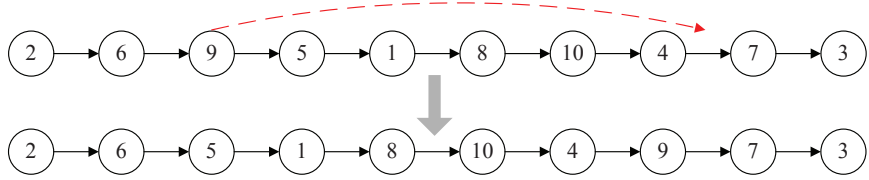}\\
  \caption{Illustration of the insertion operator}\label{fig:fig_insert}
\end{figure}

Pairwise Exchange Operator ($\mathcal{N}_{3}$): The pairwise exchange operator is similar to the swap operator except that the swap is applied to a pair of jobs rather than a single job. In our implementation, these pairwise jobs are selected from all the combinations of two out of $n$ jobs. It exchanges their positions and computes the total tardiness of the resulting sequence. Once an improvement is obtained, the incumbent solution is updated by the new solution.
%%The whole procedure is described in Algorithm \ref{alg:alg4}. %%This operator also runs in $O(n^{2})$ time.

%
%\begin{algorithm}[h]
%\caption{\label{alg:alg4}Pairwise Exchange Moves ($N_{3}$)}
%\begin{algorithmic}[1]
%
%\STATE $i\leftarrow 1$;
%\WHILE {$i\leqslant n$}
%\STATE $j\leftarrow 1$
%\WHILE {$j\leqslant n$}
%\STATE select two distinct positions $k_1$ and $k_2$ that are different from $i$ and $j$;
%\STATE exchange the positions of $i$ and $k_1$ and of $j$ and $k_2$;
%\IF {the new solution is improved}
%\STATE update the current solution;
%\STATE $j\leftarrow n$
%\STATE $i\leftarrow 0$
%\ENDIF
%\STATE $j\leftarrow j+1$;
%\ENDWHILE
%\STATE $i\leftarrow  i+1$;
%\ENDWHILE
%
%\end{algorithmic}
%\end{algorithm}

Couple Insertion Operator ($\mathcal{N}_{4}$): This procedure is similar to the
insertion operator $\mathcal{N}_{2}$. But the   operation is for a pair of successive jobs.
For each couple of jobs $\pi_{i}$ and $\pi_{i+1}$, $1\le i<n-1$, the operator extracts these two jobs
and inserts them in another pair of positions $j$ and $j+1$, $1\le j\le n-1$. Note that $i\neq j$.
%The search procedure repeats until no improvement is obtained. ({\bf what do you mean by `until no improvement is obtained'? Does this operator exhaust all possibilities??? })

2-opt Operator ($\mathcal{N}_{5}$): The 2-opt is the most classical heuristic
for the traveling salesman problem in which it removes two edges from the
tour and reconnects the two paths created. It can be employed as a simple heuristic to solve some scheduling problems \citep{Chang2011229}. In our implementation  (Algorithm
\ref{alg:alg_2opt}), the 2-opt operator selects two jobs $\pi_i$
and $\pi_j$ in the current sequence. It then deletes the edge connecting job $\pi_i$
and its successor and the edge connecting  job $\pi_j$ with its successor.
Afterwards, it  constructs a new connection of $\pi_i$ to $\pi_j$ and a new connection between  their
respective successors. Furthermore, the partial sequence between the successors
of $\pi_i$ and $\pi_j$ is reversed. Figure \ref{fig:fig_2opt} illustrates an
example of the 2-opt procedure. When an improvement is found in terms of the objective value, the incumbent is updated with the improved sequence. The search continues with applying the 2-opt operator to all possible pairs of jobs that are at least {\em three} positions away from each other.  The solution obtained by this operator
may not  significantly differ from the incumbent in terms of the objective values.  But
some random jumps in the objective value may be  achieved to escape from a current local optimum due to the path reversion.

\begin{algorithm}[h]
\caption{\label{alg:alg_2opt}2-opt Operator ($\mathcal{N}_{5}$)}
\begin{algorithmic}[1]

\FOR { $i\leftarrow1$ to $n$ }

\FOR { $j\leftarrow1$ to $n$ }

\STATE $I\leftarrow \min(i,j)$;

\STATE $J\leftarrow \max(i,j)$;

\IF {$J-I\geqslant3$ }

\STATE apply the 2-opt procedure to the jobs $\pi_i$ and $\pi_j$ to generate a new sequence $\pi '$;

\IF {the new sequence $\pi '$ is better than the current sequence in terms of objective value}

\STATE update the incumbent;

\ENDIF
\ENDIF
\ENDFOR
\ENDFOR
\end{algorithmic}
\end{algorithm}

\begin{figure}

\begin{centering}
\includegraphics[scale=0.6]{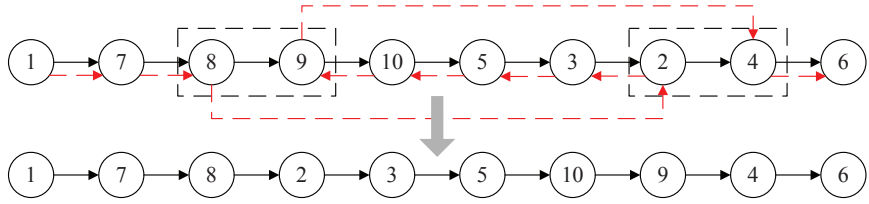}
\par\end{centering}

\caption{\label{fig:fig_2opt}Illustration of the 2-opt operator ($\mathcal{N}_{5}$)}

\end{figure}

\subsubsection{Shaking and local search}
A shaking  operation is performed before the local search in each iteration. The shaking procedure plays a role in diversifying the search and facilitating escape from a local optimum.
To preserve the simplicity of the  principles of the VNS, both  the shaking procedure  and the local search     of the GVNS
make use of the same set $\mathcal{N}_{k}$ ($k$=1,$\ldots,5$) of neighborhood structures. The shaking operation is implemented by generating randomly a neighboring solution $\pi'$ of the current one $\pi$ using a    given neighborhood operator $\mathcal{N}_{k}$. The neighborhood operator $\mathcal{N}_k$ will be chosen   to cycle from $\mathcal{N}_1$ through $\mathcal{N}_5$. If  the given neighborhood structure is $\mathcal{N}_{1}$ or $\mathcal{N}_{5}$, then the shaking procedure randomly selects two jobs. If the given neighborhood structure is $\mathcal{N}_{2}$, then the shaking procedure randomly selects a job and an insertion position. If the given neighborhood structure is $\mathcal{N}_{3}$, then the shaking procedure randomly selects two pairs of jobs. If the given neighborhood structure is $\mathcal{N}_{4}$, then the shaking procedure randomly selects a couple of two consecutive jobs and a couple of two consecutive positions.

A complete local search is organized as a VND
using the proposed neighborhood structures. To efficiently explore
possible solutions, a sequential order $K$ of applying these neighborhoods are randomly generated.
For instance, assuming that the sequence is $K=(3,1,2,5,4)$, the search starts from $\mathcal{N}_{3}$ and ends at $\mathcal{N}_{4}$.
For each neighborhood, a new local optimum $\pi''$ is obtained by
carrying out the corresponding local search operation. If $\pi''$ is better
than $\pi'$, the new solution $\pi''$  will be accepted as a  descent so that  $\pi'$ is updated with $\pi''$,
and the search continues for a new $\pi''$ within the current neighborhood; otherwise the search turns to the next neighborhood in $K$. The search stops until   the last neighborhood  in $K$ is explored. It is worthwhile to point out that after one iteration of a shaking and local search, the   solution $\pi'$ generated by the shaking procedure may not be improved.

%Moreover,
%in an attempt to reduce the computational time for the GVNS, we transpose pairwise adjacent jobs
%if they are not in the specific order mentioned in properties  \ref{pro:Property 1} and   \ref{pro:Property 2} at the end of each iteration of  the shaking and local search steps.  ({\bf  we discussed to remove this paragraph and the item 6 below. Do you decide to keep this????? })
The shaking and local search  steps are summarized below.
%in Algorithm
%\ref{alg:alg5}.
\begin{enumerate}
  \item Begin the shaking procedure. Use the current solution $\pi$ to randomly generate a neighbor $\pi '$  and set $i=1$.
  \item Obtain the sequential order $K$ of neighborhood search at random.
  \item Apply the neighborhood operator $\mathcal{N}_{K(i)}$ to $\pi{'}$ to achieve a local optimum $\pi{''}$, where $K(i)$ means the $i$-th entry of the sequence $K$.
  \item If $\pi{''}$ is better than $\pi{'}$, update $\pi{'}$ and keep the value $i$ so the search will continue within the current neighborhood; otherwise, $i\leftarrow (i+1)$.
  \item If $i>k_{\max}=5$, complete one iteration of the shaking and local search; otherwise, go to Step 3.
 % \item Transpose pairwise adjacent jobs in $\pi'$ if they are not in the specific order mentioned in property \ref{pro:Property 1} and property \ref{pro:Property 2}.
\end{enumerate}

%\begin{algorithm}
%\caption{\label{alg:alg5}Variable neighborhood descent}
%\begin{algorithmic}
%
%\STATE use the incumbent solution $\pi$ to obtain a random solution $\pi^{'}$
%at the shaking phase;
%
%\STATE $k\leftarrow1$;
%
%\REPEAT
%
%\STATE apply the neighborhood $N_{k}$ with $\pi^{'}$ as initial solution
%to generate a local optimal solution $\pi^{''}$;
%
%\IF {the local optimum $\pi^{''}$ is better than $\pi$ }
%
%\STATE continue the search with $N_{k}$ ($k\leftarrow1$);
%
%\ELSE
%\STATE $k\leftarrow k+1$;
%\ENDIF
%\UNTIL {$k=k_{max}$}
%\end{algorithmic}
%\end{algorithm}

\subsubsection{Perturbation phase}

Since a local search applied to optimization problems often suffers  from
getting trapped in a local optimum, the well-know approach for this
deficiency is to adopt a multi-start method when no improvement is
observed. Multi-start heuristics can usually be characterized as iterative
procedures consisting of two phases: the first phase in which a starting
solution is generated and the second one in which the starting solution
is typically (but not necessarily) improved by local search methods.
However, a far more effective approach is to generate the new starting
solution from an obtained local optimum by a suitable perturbation method. %Moreover, Crossover operator is one of three main
%procedures in genetic algorithm. It can produce some promising solutions
%in the solution space for combinatorial optimization problems.

Inspired by the multi-start strategy, we provide a  perturbation phase with 3-opt operator {\em without change of direction}
for producing a new starting solution in case a local search is trapped in a local optimum.
In our implementation, if the current solution $\pi$ cannot be
further improved through a predetermined number $\gamma$ of iterations of the shaking and local search procedures,
the GVNS algorithm assumes that there is no hope to continue the local search
based on the current best solution $\pi $. Then, the 3-opt operator (Figure \ref{fig:fig_3opt}) is implemented on   $\pi$. Three jobs are randomly selected from the incumbent sequence.  The 3-opt operator removes the three edges connecting
the selected jobs with their successors. Then it reconnects the four
subsequences created. The direction of these subsequences remains
unchanged because reversing the direction of these subsequences may produce   a much worse solution after a number of  local searches.

%a randomly
%solution $\pi_{rand}$ is generated and the linear order crossover
%(LOX) is carried out between $\pi_{rand}$ and $\pi_{best}$. A example
%of LOX is illustrated in Figure \ref{fig:fig3}. The offspring solutions
%combined properly some good features of the solution $\pi_{beest}$
%and may have even better features compared to the randomly generated
%solutions. The one with lower objective value among the generated
%offspring solutions is looked as the new starting solution. The GVNS
%is restarted to gain further improvements in the best solutions found
%so far.

%3-opt Moves ($N_{5}$): In order to enough explore the solution space,
%we introduce 3-opt without change of direction to generate more complex
%neighbors. Three jobs are randomly selected from the incumbent sequence,
%let $i$, $j$ and $k$ be these jobs. It remove the three edges connecting
%the selected jobs with their successors. Then It reconnects the four
%subsequences created. The direction of these subsequences remains
%unchanged. This step is repeated $n$ times and an example can found
%in Figure \ref{fig:fig2}.

\begin{figure}

\begin{centering}
\includegraphics[scale=0.6]{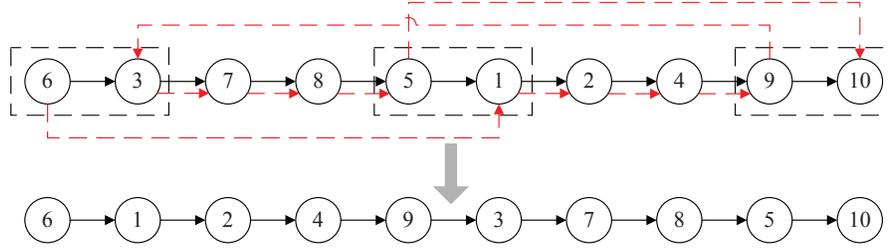}
\par\end{centering}

\caption{\label{fig:fig_3opt}Illustration of the 3-opt moves ($\mathcal{N}_{5}$)}

\end{figure}

\subsubsection{Proposed GVNS algorithm}

%The proposed GVNS algorithm steps are shown in Algorithm \ref{alg:GVNS}.
The GVNS algorithm starts with an initialization phase in which  an initial solution $\pi_0$ is generated by the EDD rule and is set as the current solution $\pi$.
To evaluate the quality of solutions, the solution value is considered as
the total tardiness.

At each iteration, the proposed algorithm needs to carry out mainly
two steps. First,
   a shaking procedure is performed to generate a random neighboring
solution $\pi{'}$ of the incumbent solution $\pi$.
Subsequently, a local search based on the VND with $\pi{'}$ as the input solution
is performed in an attempt to improve $\pi'$.
The local   solution $\pi{'}$, possibly improved by the local search  is then compared to
the current best solution $\pi$ in terms of the objective function value. If $\pi{'}$
is better than $\pi$, the current solution $\pi$ is updated by
$\pi{'}$. This completes one iteration of the shaking and local search. The GVNS algorithm then continues with next  iteration  of shaking and local search.   In addition, if no
  improvement of the current best solution $\pi$ can be found through a predetermined number $\gamma$ of
iterations of shaking and local search, a  3-opt perturbation procedure   is
used to generate a newly starting solution $\pi$.

We next describe the {\em stopping criterion}.
A pre-specified maximum number $Iter_{\mathrm{max}}$
of iterations of shaking and local search  or  a maximum number $Iter_{\mathrm{nip}}$ between two iterations without improvement   are used as the stopping criterion of the algorithm.
In our implementation, we choose  $Iter_{\mathrm{max}}=500$ and $Iter_{\mathrm{nip}}=150$, respectively. Moreover, the predetermined number $\gamma$ of iterations to perform a 3-opt perturbation is chosen to be $\gamma=0.5\cdot Iter_{\mathrm{nip}}$. This strategy is  determined by  our preliminary experiments.
  A Pseudo-code of the proposed GVNS algorithm is summarized in Algorithm \ref{alg:GVNS}.
%   ({\bf it seems that the criteria $\beta$ either can not be reached or the 3-opt operation only can be done once if $\beta$ is used.   } )
%Finally,
%The GVNS algorithm
%continues until when the incumbent solution cannot be further improved
%for a given number of iterations or a pre-specified maximum number
%of iterations is achieved. When the process of GVNS algorithm stops,
%the finial current solution is used as the best solution $\pi^{*}$.

\begin{algorithm}[h]
\caption{\label{alg:GVNS}General variable neighborhood search heuristic}
%({\bf Double check this algorithm!!!!  })
\begin{algorithmic}[1]

\STATE Initialize the parameters of the algorithm;
\STATE Define a set of neighborhood structures $\mathcal{N}_{k}$($k=1,\ldots,5$),
that will be used in the shaking phase and the
local search phase;

\STATE Generate the initial solution $\pi_{0}$ by the EDD rule;

\STATE Calculate the objective value $f(\pi_{0})$ for the solution $\pi_{0}$;

\STATE Set  the current best solution $\pi=\pi_{0}$;

\STATE Choose the stopping criterion; initialize the counter: $iter_1=0$, $iter_2=0$ and $iter_3=0$;

\STATE Set the first neighborhood structure for the shaking procedure to be $k\leftarrow 1$;

\REPEAT

\STATE \textbf{Shaking}: Generate a point $\pi{'}$ at random in
the $\mathcal{N}_{k}$  neighborhood of $\pi$;
\STATE Produce a  random    sequence $K$ of applying the 5 neighborhood structures;

\STATE \textbf{Local search}: Apply the VND scheme in the order  specified by $K$, update $\pi{'}$ if a better local optimum is obtained;

\IF {$f(\pi{'})<f(\pi)$}
\STATE $\pi\leftarrow\pi{'}$
\STATE reset the counter $iter_2=0$;
\STATE reset the counter $iter_3=0$;
\ELSE \STATE Increment the counters: $iter_2=iter_2+1$, $iter_3=iter_3+1$;
\ENDIF

\STATE Update the counter of total number of iterations $iter_1=iter_1+1$;
\IF {$\pi$ has not improved for a given number of iterations $\gamma$, that is, when  $iter_3 >\gamma$, and if $iter_2<iter_{NIP}$  }

\STATE use the 3-opt perturbation procedure to generate a new
starting solution $\pi$; and reset $iter_3=0$;

\ENDIF

\STATE Set $k=k\mod 5 +1$ to cycle through the neighborhood structures for shaking.

\UNTIL {the stopping criterion is met, that is, $iter_1>Iter_{\mathrm{max}}$ or $iter_2>Iter_{\mathrm{nip}}$};

\STATE Output the current best solution $\pi$.
\end{algorithmic}
\end{algorithm}

\section{Computational experiments\label{sec:Sec4}}

In this section, in order to evaluate the performance of the GVNS
algorithm and the proposed heuristic SWSP, a set of testing instances were  generated and solved on
a PC with Intel Core i3 3.20 GHz CPU and 4 GB of memory
in the environment of Windows 7 OS. The procedure of generating the testing instances  and
analysis of the results are described below.

\subsection{Experiments design }

In this article, the basic processing times ($a_{j}$) are randomly
generated from a discrete uniform distribution on the  interval (0, 100{]}.
The deteriorating dates ($h_{j}$) are   picked from the uniform
distributions over three different intervals (0, $A$/2{]}, {[}$A$/2,
$A${]} and (0, $A${]}, where,  the value of $A$ is obtained from
the equation $A=\sum_{j\in N}a_{j}$. The deterioration penalties $b_j, j\in N$,
are   drawn from the uniform distribution (0, 100\texttimes{}$\tau${]},
where we choose $\tau$=0.5. In addition, the due dates are randomly generated
according to a discrete uniform distribution from two different
intervals (0, 0.5\texttimes{}$\bar{C}_{\max}${]},
and (0,$\bar{C}_{\max}${]}, where $\bar{C}_{\max}$ is the value of
the makespan obtained by arranging the jobs in the non-decreasing
order of the ratios $a_{j}/b_{j}$, $j\in N$   \citep{Bachman2000LD}.

The algorithms were tested over five different job sizes for  small-sized instances. Those sizes are respectively $n$= 8, 10, 15, 20 and 25. For all possible combinations
of deteriorating dates and due dates, to account for the three intervals in which deteriorating dates are generated and the two intervals in which due dates are drawn,  six  types of problems
are needed to solve for a specific job size. For convenience, these
types of problems are denoted by symbols $S$\_$k_{1}k_{2}$.
For example, $S$\_12 represents a type of problems with deteriorating
dates drawn from the  interval (0, $A$/2{]} and due dates drawn from the  interval (0,$\bar{C}_{\max}${]}.
In addition,   a set of large sized instances with $n$=\{50,
60, 70, 80, 90, 100\} are being considered. As a consequence, for both small-sized and large-sized problems,
66 ($(5+6)\times6$) sample  instances were randomly generated.

\subsection{Experimental results}

Because there are no comparative data and no competing heuristic for
our problem, comparisons with best know solutions are not possible.
Thus, we have designed and coded the standard VNS heuristic (Algorithm \ref{alg:VNS})
as a comparison to assess our approach. The VNS heuristic does not include the perturbation phase.
 The implementation sequence of neighborhoods is chosen by the deterministic order
$( \mathcal{N}_{1}, \mathcal{N}_{2}, \mathcal{N}_{3}, \mathcal{N}_{4}$ and $\mathcal{N}_{5})$.  The stopping criteria  are chosen to be  identical to the GVNS algorithm.

\begin{algorithm}[h]
\caption{\label{alg:VNS}Variable neighborhood search heuristic}
%{\bf THis algorithm has some problem. Please double check this. THe problem is what is one iteration? and How the stopping criterion is applied.   }
\begin{algorithmic}[1]
\STATE Initialize the parameters of the algorithm;
\STATE Define a set of neighborhood structures $\mathcal{N}_k$,$k=1,\ldots,5$;
\STATE Generate an initial solution $\pi_{0}$  by the EDD rule;
\STATE Set   the current best solution $\pi=\pi_{0}$;
\STATE Initialize the counters: $iter_1=0$, $iter_2=0$.
\STATE $k\leftarrow 1$;
\REPEAT
%\WHILE {$k\leqslant 5$}
\STATE \textbf{Shaking}: Generate randomly a point $\pi{'}$ in the $k$th neighborhood of $\pi$;
\STATE \textbf{Local search}: Apply neighborhood search operator $\mathcal{N}_{k}$ with $\pi{'}$ as initial solution to obtain a local optimum  $\pi{''}$;
\IF {$f(\pi{''})<f(\pi)$}
\STATE $\pi\leftarrow\pi{''}$;
\STATE continue the local search with the current neighborhood $\mathcal{N}_{k}$;
\STATE reset the counter $iter_2=0$;
\ELSE
\STATE  $k\leftarrow k \mod 5+1$;
\STATE Increment the counter: $iter_2=iter_2+1$;
\ENDIF
%%\ENDWHILE
%\IF {$f(\pi)<f(\pi^{*})$}
%\STATE $\pi^{*}\leftarrow\pi$;
%
%\ELSE
%
%\ENDIF
\STATE Update the counter of total number of iterations $iter_1=iter_1+1$;
\UNTIL {the stopping criterion is reached}, that is, $iter_1>Iter_{\max}$ or $iter_2>Iter_{\mathrm{nip}}$;
\STATE Output the current best solution $\pi$.
\end{algorithmic}
\end{algorithm}

The described problem can be solved optimally by a commercial solver
CPLEX 12.5 for some small-sized instances. However, because of its NP-hardness,
it is impossible to obtain optimal solution by the CPLEX
for medium or large instances. The preset run time for CPLEX is one hour. If CPLEX fails to either
converge to the optimum or prove the optimality of the incumbent within
the time limit, a GAP is provided by the software. The GAP shows the quality of a solution given by the CPLEX within the run time limit to some extent.
A large GAP implies that the CPLEX needs more time to converge to an optimal solution.
% ({\bf The explanation of the GAP does not make sense to me. It should indicate the distance from a certain kind of lower bound or best solution??????  }  )

All proposed algorithms are implemented in Matlab 7.11.
The CPLEX and the SWSP are deterministic hence only one run is necessary for each problem instance. While the VNS and the GVNS are stochastic so we have to run some replications in order to better assess the experimental results. In our experiment, for each instance we run 10 times when using the VNS and the GVNS.

Each algorithm's performance is measured   by computing a relative percentage deviation ($RPD$) defined by the equation
\begin{gather}\label{eq:4.1}
 RPD(\%)=\frac{Z_{\mathrm{alg}}-Z_{\mathrm{best}}}{Z_{\mathrm{best}}}\times100
\end{gather}
where $Z_{\mathrm{alg}}$ is the solution value obtained for a given algorithm and instance, $Z_{\mathrm{best}}$ is the best solution of all approaches.
%The performance of these heuristics is compared in terms of RPD and computational time over small- and large-sized instances.
 For the VNS and the GVNS, the comparison results  are  in terms of average $RPD$ and average computational time.

Table \ref{tab:small} summaries the results obtained from the CPLEX, the SWSP, the VNS and the GVNS. As shown in the table, the GVNS finds the best solutions for all small-sized instances. Since the computational time of the CPLEX was preset to one hour, the CPLEX could not find the optimal solutions  for all instances  with 20 or 25 jobs. The computational time of the CPLEX exponentially increases as the number of jobs increases
due to the NP-hardness of the problem.
Note that 4 solutions obtained by the CPLEX are worse than those  obtained by the  GVNS.
The average $RPD$ given  by the SWSP is 11.63$\%$. The run times of the SWSP are very short and are ignored in table \ref{tab:small}. Compared to the CPLEX, the run times of the GVNS and the VNS do not significantly increase as the number of jobs increases.  The  computational time of the GVNS is longer than that of the VNS because the GVNS includes the VND scheme as the local search and additionally the perturbation phase. However, the VNS delivers the best solutions for only 13 out of the 30 small-sized test instances.
 The $RPD$ delivered by VNS is only 2.32$\%$. This suggests that these designed neighborhood structures are well suitable for solving the problem under consideration.

%The results indicate that the GVNS is very effective in solving the single-machine scheduling problem with step-deteriorating jobs.

\begin{sidewaystable*}[h]
  \centering
  %\begin{threeparttable}[b]
  \caption{Comparison of the   CPLEX, the SWSP, the VNS and  the GVNS for small-sized instances.} \label{tab:small}%
    \begin{tabular}{ccccccccccccc}
    \toprule
    \multirow{2}[1]{*}{Groups} & \multirow{2}[1]{*}{\textit{n}} & \multicolumn{3}{c}{CPLEX} & \multicolumn{2}{c}{SWSP} & \multicolumn{3}{c}{VNS} & \multicolumn{3}{c}{GVNS} \\

    \cmidrule[0.05em](r){3-5}
    \cmidrule[0.05em](lr){6-7}
    \cmidrule[0.05em](lr){8-10}
    \cmidrule[0.05em](l){11-13}

        &       & Objective Value & Time(s) & GAP(\%) & Objective Value & RPD(\%)   & Mean  & RPD(\%) & Time(s) & Mean  & RPD(\%) & Time(s) \\

    \midrule

    \emph{S}\_11   & 8     & 585   & 0.12  & 0.00  & 585   & 0.00  & 585   & 0.00  & 0.08  & \textbf{585} & 0.00  & 0.75  \\
          & 10    & 638   & 0.87  & 0.00  & 680   & 6.58  & 645   & 1.10  & 0.12  & \textbf{638} & 0.00  & 1.44  \\
          & 15    & 1891  & 1621.10  & 0.00  & 2129  & 12.59  & 1896.2 & 0.27  & 0.37  & \textbf{1891} & 0.00  & 3.75  \\
          & 20    & 3062  & 3600.00  & 60.79  & 3427  & 11.92  & 3095.8 & 1.10  & 0.74  & \textbf{3062} & 0.00  & 7.70  \\
          & 25    & 5476  & 3600.00  & 79.40  & 6048  & 16.17  & 5236.4 & 0.58  & 1.49  & \textbf{5206} & 0.00  & 16.63  \\
    \emph{S}\_12   & 8     & 301   & 0.08  & 0.00  & 301   & 0.00  & 301   & 0.00  & 0.07  & \textbf{301} & 0.00  & 0.68  \\
          & 10    & 570   & 0.30  & 0.00  & 595   & 4.39  & 590.8 & 3.65  & 0.12  & \textbf{570} & 0.00  & 1.40  \\
          & 15    & 739   & 57.00  & 0.00  & 772   & 4.47  & 739   & 0.00  & 0.25  & \textbf{739} & 0.00  & 3.24  \\
          & 20    & 343   & 482.67  & 0.00  & 366   & 6.71  & 366   & 6.71  & 0.56  & \textbf{343} & 0.00  & 6.81  \\
          & 25    & 1062  & 3600.00  & 28.61  & 1757  & 65.44  & 1066.5 & 0.42  & 1.04  & \textbf{1062} & 0.00  & 13.81  \\
    \emph{S}\_21   & 8     & 464   & 0.09  & 0.00  & 464   & 0.00  & 464   & 0.00  & 0.07  & \textbf{464} & 0.00  & 0.61  \\
          & 10    & 609   & 0.45  & 0.00  & 615   & 0.99  & 609   & 0.00  & 0.11  & \textbf{609} & 0.00  & 1.05  \\
          & 15    & 1243  & 463.31  & 0.00  & 1248  & 0.40  & 1243  & 0.00  & 0.22  & \textbf{1243} & 0.00  & 2.84  \\
          & 20    & 2929  & 3600.00  & 67.15  & 3161  & 7.92  & 2929  & 0.00  & 0.55  & \textbf{2929} & 0.00  & 7.11  \\
          & 25    & 5057  & 3600.00  & 96.03  & 5036  & 2.71  & 4909  & 0.12  & 0.62  & \textbf{4903} & 0.00  & 11.17  \\
    \emph{S}\_22   & 8     & 189   & 0.11  & 0.00  & 194   & 2.65  & 189.5 & 0.26  & 0.06  & \textbf{189} & 0.00  & 0.59  \\
          & 10    & 569   & 1.93  & 0.00  & 569   & 0.00  & 569   & 0.00  & 0.11  & \textbf{569} & 0.00  & 1.06  \\
          & 15    & 540   & 120.70  & 0.00  & 540   & 0.00  & 540   & 0.00  & 0.25  & \textbf{540} & 0.00  & 3.03  \\
          & 20    & 1352  & 3600.00  & 21.45  & 1352  & 0.00  & 1352  & 0.00  & 0.49  & \textbf{1352} & 0.00  & 5.95  \\
          & 25    & 191   & 3600.00  & 11.52  & 302   & 58.12  & 203.6 & 6.60  & 0.87  & \textbf{191} & 0.00  & 9.83  \\
    \emph{S}\_31   & 8     & 664   & 0.09  & 0.00  & 672   & 1.20  & 664   & 0.00  & 0.07  & \textbf{664} & 0.00  & 0.73  \\
          & 10    & 856   & 1.28  & 0.00  & 858   & 0.23  & 856   & 0.00  & 0.13  & \textbf{856} & 0.00  & 1.22  \\
          & 15    & 2316  & 833.79  & 0.00  & 2437  & 5.22  & 2323.1 & 0.31  & 0.37  & \textbf{2316} & 0.00  & 3.56  \\
          & 20    & 2332  & 3600.00  & 44.51  & 2547  & 9.31  & 2331.4 & 0.06  & 0.73  & \textbf{2330} & 0.00  & 8.09  \\
          & 25    & 2354  & 3600.00  & 80.46  & 2503  & 6.33  & 2360.5 & 0.28  & 0.98  & \textbf{2354} & 0.00  & 12.15  \\
    \emph{S}\_32   & 8     & 319   & 0.06  & 0.00  & 319   & 0.00  & 319   & 0.00  & 0.07  & \textbf{319} & 0.00  & 0.68  \\
          & 10    & 217   & 0.45  & 0.00  & 236   & 8.76  & 232.2 & 7.00  & 0.11  & \textbf{217} & 0.00  & 1.17  \\
          & 15    & 50    & 23.65  & 0.00  & 80    & 60.00  & 62.4  & 24.80  & 0.41  & \textbf{50} & 0.00  & 3.73  \\
          & 20    & 378   & 1106.25  & 0.00  & 500   & 32.28  & 378.5 & 0.13  & 0.79  & \textbf{378} & 0.00  & 5.92  \\
          & 25    & 218   & 3600.00  & 34.40  & 234   & 24.47  & 218.2 & 16.06  & 0.86  & \textbf{188} & 0.00  & 11.16  \\
    \multicolumn{2}{c}{Average} &       & 1357.14  & 17.48  &       & 11.63  &       & 2.32  & 0.42  &       & 0.00  & 4.93  \\
    \bottomrule

    \end{tabular}%
%    \begin{tablenotes}
%    \item [a] {}
%    \end{tablenotes}
  %\end{threeparttable}
\end{sidewaystable*}%
%\clearpage

For large-sized instances, it is impossible to find the optimal solution by using the CPLEX within the preset one-hour time limit. Thus we tested the performance of the proposed algorithms with respect to the standard VNS. The results are shown in table \ref{tab:large}. The average $RPD$ of the GVNS is about 0.78$\%$ which is significantly smaller than that of the VNS. Except for  instance $S\_12$ with 60 jobs, the  RPDs of other instances given by the GVNS are very small. The relative higher RPD of   instance $S\_{12}$ may be  because its optimal solution value is relatively small and there are 2 local optima found in 10 replications. According to figure \ref{RPD_L}, it is observed that {\em the quality of solutions delivered by the GVNS is very good without respect to the distribution of the deteriorating dates}.
On the other hand, the quality of solutions obtained by the SWSP is inferior compared to the GVNS and the VNS.

To measure the robustness of the algorithm, a mean absolute deviation (MAD) of 10 runs   when applying  the VNS or  the GVNS was calculated for each of  large-sized instances according to the equation
\begin{gather}\label{eq:4.2}
 MAD(\%)=\frac{1}{R\times \bar Z_{\mathrm{alg}}}\sum_{r=1}^{R}(Z_{\mathrm{alg}}(r)-\bar{Z}_{\mathrm{alg}})\times100
\end{gather}
where $R$ is the number  of replications,  $\bar{Z}_{\mathrm{alg}}$ denotes the average solution value obtained from the  $R$ runs for a given algorithm, and $Z_{\mathrm{alg}}(r)$ indicates the solution value obtained for a given algorithm in the $r$-th run.
The average $MAD$ of the GVNS is merely 0.81$\%$.
Figure \ref{MAD_L} reports the $MAD$s obtained by the GVNS for large-sized instances.
Overall, the GVNS demonstrates to be  a good alternative to solve  single-machine total tardiness scheduling problems with step-deteriorating jobs, because the algorithm finds good solutions in a reasonable computational time.

%constitutional

\begin{sidewaystable*}[h]
  \centering
  %\begin{threeparttable}[b]
  \caption{Comparison of the SWSP, the VNS and the GVNS for large-sized instances.} \label{tab:large}%
    \begin{tabular}{ccccccccccccc}
    \toprule
    \multicolumn{1}{c}{\multirow{2}[1]{*}{Groups}} & \multicolumn{1}{c}{\multirow{2}[1]{*}{\textit{n}}} & \multicolumn{2}{c}{SWSP} & \multicolumn{4}{c}{VNS}       & \multicolumn{4}{c}{GVNS} \\

    \cmidrule[0.05em](r){3-4}
    \cmidrule[0.05em](lr){5-8}
    \cmidrule[0.05em](l){9-12}

    \multicolumn{1}{c}{} & \multicolumn{1}{c}{} & \multicolumn{1}{c}{Objective Value} & \multicolumn{1}{c}{RPD(\%)} & \multicolumn{1}{c}{Mean} & \multicolumn{1}{c}{RPD(\%)} & \multicolumn{1}{c}{MAD(\%)} & \multicolumn{1}{c}{Time(s)} & \multicolumn{1}{c}{Mean} & \multicolumn{1}{c}{RPD(\%)} & \multicolumn{1}{c}{MAD(\%)} & \multicolumn{1}{c}{Time(s)} \\

    \midrule

   \multicolumn{1}{c}{\emph{S}\_11} & \multicolumn{1}{c}{50} & 20820 & 15.90  & 18200 & 1.31  & 0.58  & 5.67  & 17971 & 0.04  & 0.06  & 120.13  \\
    \multicolumn{1}{c}{} & \multicolumn{1}{c}{60} & 23895 & 15.24  & 20995 & 1.25  & 0.89  & 13.17  & 20761 & 0.13  & 0.15  & 224.42  \\
    \multicolumn{1}{c}{} & \multicolumn{1}{c}{70} & 42027 & 19.00  & 36339 & 2.90  & 0.67  & 14.86  & 35324 & 0.02  & 0.01  & 338.22  \\
    \multicolumn{1}{c}{} & \multicolumn{1}{c}{80} & 67739 & 24.61  & 55266 & 1.66  & 0.42  & 20.30  & 54400 & 0.07  & 0.09  & 533.01  \\
    \multicolumn{1}{c}{} & \multicolumn{1}{c}{90} & 82112 & 16.35  & 72307 & 2.46  & 0.44  & 25.02  & 70675 & 0.14  & 0.04  & 634.76  \\
          & \multicolumn{1}{c}{100} & 78452 & 17.05  & 67609 & 0.87  & 0.26  & 58.67  & 67215 & 0.28  & 0.19  & 931.67  \\
    \multicolumn{1}{c}{\emph{S}\_12} & \multicolumn{1}{c}{50} & 8501  & 29.27  & 6817.3 & 3.67  & 2.18  & 5.63  & 6603.1 & 0.41  & 0.74  & 114.08  \\
    \multicolumn{1}{c}{} & \multicolumn{1}{c}{60} & 1208  & 4546.15  & 437.3 & 1581.92  & 29.66  & 13.76  & 29.6  & 13.85  & 17.03  & 145.59  \\
    \multicolumn{1}{c}{} & \multicolumn{1}{c}{70} & 4047  & 289.13  & 2279.4 & 119.17  & 8.48  & 16.69  & 1065.4 & 2.44  & 2.22  & 274.37  \\
    \multicolumn{1}{c}{} & \multicolumn{1}{c}{80} & 9808  & 56.20  & 7365.1 & 17.30  & 6.44  & 20.89  & 6318.6 & 0.63  & 0.98  & 440.14  \\
          & \multicolumn{1}{c}{90} & 8731  & 110.13  & 5015.7 & 20.71  & 5.89  & 37.52  & 4212  & 1.37  & 1.08  & 625.96  \\
    \multicolumn{1}{c}{} & \multicolumn{1}{c}{100} & 4961  & 158.92  & 3211.2 & 67.60  & 9.51  & 48.56  & 1966.8 & 2.65  & 3.05  & 851.83  \\
    \multicolumn{1}{c}{\emph{S}\_21} & \multicolumn{1}{c}{50} & 20980 & 6.28  & 19788 & 0.24  & 0.18  & 5.19  & 19740 & 0.00  & 0.00  & 69.45  \\
    \multicolumn{1}{c}{} & \multicolumn{1}{c}{60} & 29281 & 5.93  & 27694 & 0.19  & 0.09  & 8.19  & 27642 & 0.00  & 0.00  & 116.10  \\
    \multicolumn{1}{c}{} & \multicolumn{1}{c}{70} & 46270 & 8.77  & 42636 & 0.23  & 0.15  & 14.47  & 42554 & 0.04  & 0.05  & 209.57  \\
          & \multicolumn{1}{c}{80} & 50660 & 10.42  & 46103 & 0.49  & 0.23  & 15.67  & 45880 & 0.00  & 0.00  & 225.04  \\
    \multicolumn{1}{c}{} & \multicolumn{1}{c}{90} & 57859 & 13.55  & 51118 & 0.32  & 0.11  & 29.56  & 50958 & 0.00  & 0.01  & 406.09  \\
    \multicolumn{1}{c}{} & \multicolumn{1}{c}{100} & 80275 & 11.98  & 71975 & 0.40  & 0.18  & 30.83  & 71695 & 0.01  & 0.01  & 583.66  \\
    \multicolumn{1}{c}{\emph{S}\_22} & \multicolumn{1}{c}{50} & 320   & 57.64  & 212.1 & 4.48  & 5.15  & 4.77  & 203   & 0.00  & 0.00  & 43.82  \\
    \multicolumn{1}{c}{} & \multicolumn{1}{c}{60} & 1647  & 55.23  & 1198.3 & 12.94  & 9.48  & 7.88  & 1061  & 0.00  & 0.00  & 114.46  \\
          & \multicolumn{1}{c}{70} & 5063  & 8.91  & 4797.4 & 3.19  & 1.55  & 11.64  & 4649  & 0.00  & 0.00  & 152.83  \\
    \multicolumn{1}{c}{} & \multicolumn{1}{c}{80} & 2535  & 34.98  & 2161.5 & 15.10  & 3.05  & 17.96  & 1878  & 0.00  & 0.00  & 219.78  \\
    \multicolumn{1}{c}{} & \multicolumn{1}{c}{90} & 11089 & 17.18  & 9785.6 & 3.41  & 1.64  & 33.16  & 9463.1 & 0.00  & 0.00  & 444.46  \\
    \multicolumn{1}{c}{} & \multicolumn{1}{c}{100} & 3467  & 23.34  & 3007.8 & 7.00  & 1.45  & 24.15  & 2811  & 0.00  & 0.00  & 363.21  \\
    \multicolumn{1}{c}{\emph{S}\_31} & \multicolumn{1}{c}{50} & 15639 & 14.35  & 13817 & 1.03  & 0.35  & 7.56  & 13676 & 0.00  & 0.00  & 91.41  \\
          & \multicolumn{1}{c}{60} & 31578 & 9.49  & 28930 & 0.31  & 0.06  & 7.76  & 28842 & 0.00  & 0.00  & 143.20  \\
    \multicolumn{1}{c}{} & \multicolumn{1}{c}{70} & 38983 & 19.39  & 32953 & 0.92  & 0.33  & 19.41  & 32660 & 0.02  & 0.03  & 290.32  \\
    \multicolumn{1}{c}{} & \multicolumn{1}{c}{80} & 49429 & 27.75  & 39330 & 1.65  & 0.41  & 27.62  & 38725 & 0.08  & 0.06  & 398.79  \\
    \multicolumn{1}{c}{} & \multicolumn{1}{c}{90} & 75215 & 17.39  & 64855 & 1.22  & 0.63  & 32.61  & 64123 & 0.08  & 0.06  & 623.88  \\
    \multicolumn{1}{c}{} & \multicolumn{1}{c}{100} & 76221 & 21.44  & 63412 & 1.03  & 0.28  & 30.48  & 62765 & 0.00  & 0.00  & 641.97  \\
    \multicolumn{1}{c}{\emph{S}\_32} & \multicolumn{1}{c}{50} & 1175  & 48.92  & 1121  & 42.08  & 3.07  & 5.51  & 789   & 0.00  & 0.00  & 64.15  \\
          & \multicolumn{1}{c}{60} & 999   & 239.80  & 688.7 & 134.25  & 8.09  & 8.67  & 301.2 & 2.45  & 0.96  & 113.51  \\
          & \multicolumn{1}{c}{70} & 3867  & 94.03  & 2914.6 & 46.24  & 7.29  & 15.94  & 1994  & 0.05  & 0.05  & 244.40  \\
          & \multicolumn{1}{c}{80} & 22127 & 21.74  & 18782 & 3.34  & 1.35  & 22.28  & 18179 & 0.02  & 0.02  & 517.98  \\
          & \multicolumn{1}{c}{90} & 7729  & 45.34  & 5791.5 & 8.90  & 2.17  & 31.23  & 5408  & 1.69  & 1.20  & 647.39  \\
          & \multicolumn{1}{c}{100} & 5061  & 162.64  & 4207.2 & 118.33  & 7.05  & 34.63  & 1955.8 & 1.49  & 1.18  & 540.83  \\
    \multicolumn{2}{c}{Average} &       & 174.29  &       & 61.89  & 3.33  & 20.22  &       & 0.78  & 0.81  & 347.23  \\
    \bottomrule
    \end{tabular}%
%    \begin{tablenotes}
%    \item [a] {}
%    \end{tablenotes}
  %\end{threeparttable}
\end{sidewaystable*}%
%\clearpage

\begin{figure}[h]
  \centering
  % Requires \usepackage{graphicx}
  \includegraphics[scale=0.4]{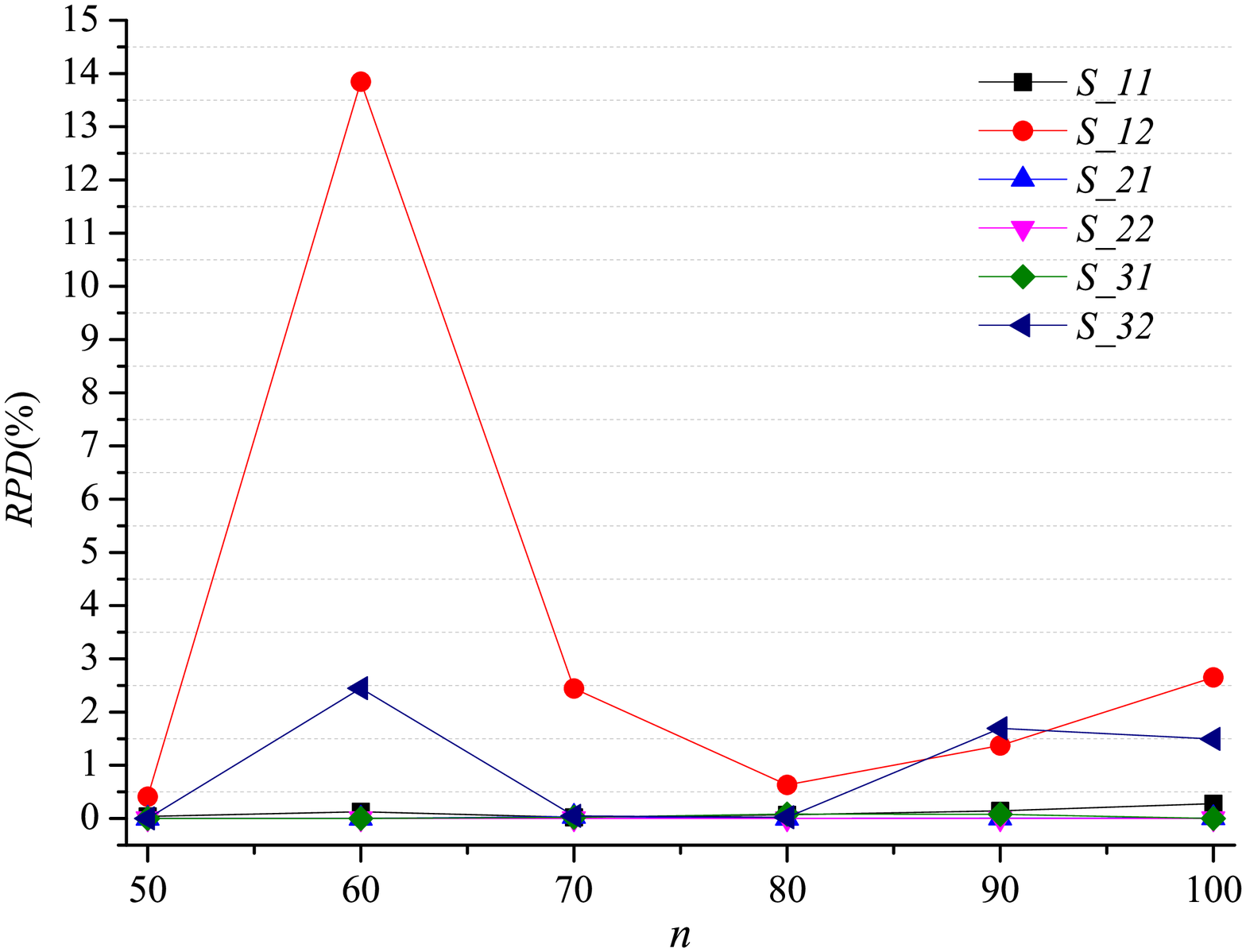}\\
  \caption{The $RPD$s given by the GVNS for   large-sized instances}\label{RPD_L}
\end{figure}

\begin{figure}[h]
  \centering
  % Requires \usepackage{graphicx}
  \includegraphics[scale=0.4]{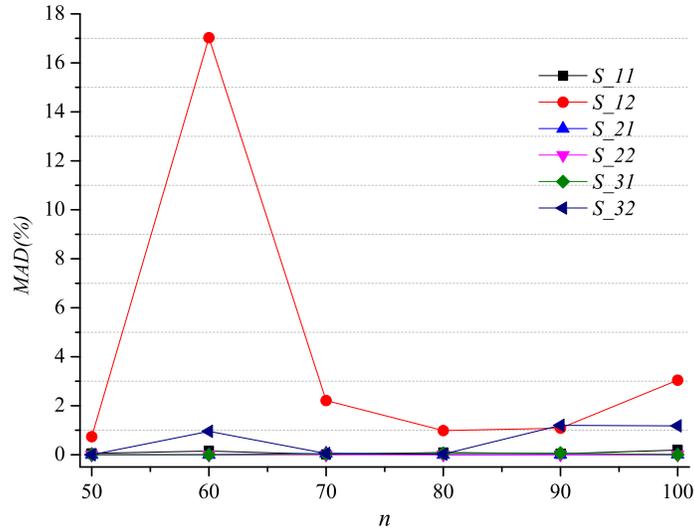}\\
  \caption{The $MAD$s given by the GVNS for   large-sized instances}\label{MAD_L}
\end{figure}

\section{Conclusions}
In this article, we considered a single-machine total tardiness scheduling problem
with step-deteriorating jobs. The objective of this problem is to determine the sequence policy of the jobs under consideration so as to minimize the total tardiness. In order to solve the problem, an integer programming model was developed. Solutions   were obtained by the CPLEX up to the size of  25 jobs. Due to the intractability of the problem,
it is impossible to solve large instances to optimality by using the CPLEX.
Based on two properties introduced, a heuristic called the SWSP and the GVNS algorithm were proposed to obtain   near-optimal solutions of the problem.
Meanwhile, the standard VNS algorithm was presented as reference for evaluating the performance of our algorithms. Experimental results
showed that the proposed GVNS outperforms  other heuristics in terms of relative percentage deviation from the best solution for both small- and large-sized instances. The computational times are in general short for the GVNS and the VNS. The GVNS performed better than the VNS in both cases. Furthermore, the GVNS is more robust than the VNS with regards  to the choice of different intervals of deteriorating dates.
%was capable of obtaining better solutions with a reasonable computational time compared to the two heuristic, especially for large sized instances.

For further study, a suggestion is to consider the setup times between different jobs for the single-machine scheduling problem with step-deteriorating jobs. In addition,  multi-machine or multi-criteria cases with step-deterioration are also encouraged.

%For acknowledgements section, please don't number the section, please begin it with \section*{Acknowledgements}
\section*{Acknowledgments}
This work was partially supported by the National Natural Science
Foundation of China (No.51175442) and the Fundamental Research Funds
for the Central Universities (No. 2010ZT03; SWJTU09CX022).

% You may incorporate your references as follows in your main tex file.
% Using BibTex is not recommended but can be handled.
\bibliographystyle{plainnat}
\addcontentsline{toc}{section}{\refname}\bibliography{SPSDTT}

\medskip
% The data information below will be filled by AIMS editorial staff
Received xxxx 20xx; revised xxxx 20xx.
\medskip

\end{document}